\def\ps@pprintTitle{%
\let\@oddhead\@empty
\let\@evenhead\@empty
\def\@oddfoot{}%
\let\@evenfoot\@oddfoot}
\newtheorem{theorem}{Theorem}[section]
\newtheorem{corollary}[theorem]{Corollary}
\newtheorem{lemma}[theorem]{Lemma}
\newtheorem{remark}[theorem]{Remark}
\makeatletter \@addtoreset{equation}{section} \makeatother
\newcommand{\e}{\varepsilon}
\newcommand{\ii}{\mathrm{i}}
\newcommand{\N}{\mathbb N}
\newcommand{\C}{\mathbb C}
\newcommand{\rd}{{\rm d}}
\renewcommand{\Re}{\mathrm{Re}}
\renewcommand{\Im}{\mathrm{Im}}
\renewcommand{\P}{\mathbb{P}}
\newcommand{\E}{\mathbb{E}}
\newcommand{\OO}{\mathcal O}
\newcommand{\leqdef}{\vcentcolon=}
\begin{document}

\begin{frontmatter}

    \title{Discretization of the maximum for the derivatives of a\\ random model of \texorpdfstring{$\log|\zeta|$}{log|zeta|} on the critical line}

    \author[a3]{Louis-Pierre Arguin\fnref{fn3}}
    \author[a1]{Fr\'ed\'eric Ouimet\corref{cor1}\fnref{fn1}}
    \ead{ouimetfr@dms.umontreal.ca}

    \author[a2]{Christian Webb\fnref{fn2}}

    \address[a3]{Baruch College and Graduate Center (CUNY), New York, NY 10010, USA.}
    \address[a1]{California Institute of Technology, Pasadena, CA 91125, USA.}
    \address[a2]{Aalto University, Aalto, FI-00076, Finland.}

    \cortext[cor1]{Corresponding author}

    \fntext[fn3]{L.-P. Arguin is supported by NSF CAREER DMS-1653602.}
    \fntext[fn1]{F. Ouimet is supported by a postdoctoral fellowship from the NSERC (PDF) and a postdoctoral fellowship supplement from the FRQNT (B3X).}
    \fntext[fn2]{C. Webb is supported by the Academy of Finland grant 308123.}

    \begin{abstract}
        In this short note, we study a random field that approximates the real part of the logarithm of the Riemann zeta function on the critical line, introduced by \cite{arXiv:1304.0677,MR3619786}, and its derivatives of all orders.
        We show that the maximum of the random field, and more generally the maximum of its $j$-th derivative, varies on a $(\log T)^{\scriptscriptstyle -\frac{1}{2}(j+2)}$ scale, which improves and extends the main result in \cite{MR3906393} and makes further progress towards the open problem of the tightness of the recentered maximum.
        Our proof is also much simpler and shorter.

        \vspace{3mm}
        \noindent
        {\bf R\'esum\'e}

        \vspace{1.8mm}
        \noindent
        Dans cette courte note, nous \'etudions un champ al\'eatoire qui approxime la partie r\'eelle du logarithme de la fonction z\^eta de Riemann sur la ligne critique, introduit par \cite{arXiv:1304.0677,MR3619786}, et ses d\'eriv\'ees de tous les ordres.
        Nous montrons que le maximum du champ al\'eatoire, et plus g\'en\'eralement le maximum de sa $j$-i\`eme d\'eriv\'ee, varie sur une \'echelle $(\log T)^{\scriptscriptstyle -\frac{1}{2}(j+2)}$, ce qui am\'eliore et \'etend le r\'esultat principal dans \cite{MR3906393} et nous fait progresser concernant le probl\`eme ouvert de la tension du maximum recentr\'e.
        Notre preuve est aussi beaucoup plus simple et courte.
    \end{abstract}

    \begin{keyword}
        extreme value theory \sep Riemann zeta function \sep maximum
        \MSC[2010]{11M06 \sep 60F10 \sep 60G60 \sep 60G70}
    \end{keyword}

\end{frontmatter}

\section{Model and background}

    Let $(U_p, \, p ~\text{primes})$ be an i.i.d.\ sequence of uniform random variables on the unit circle in $\C$.
    The random field of interest is
    \begin{align}\label{def:X}
        X_T(h) \leqdef \sum_{p \leq T} \frac{\text{Re}(U_p \, p^{-i h})}{p^{1/2}}, \quad h\in [0,1].
    \end{align}
    (A sum over the variable $p$ always denotes a sum over primes.)
    \cite{arXiv:1304.0677} showed that $(X_{\scriptscriptstyle T}(h), \, h\in [0,1])$ is a good model for the large values of $(\log |\zeta(\tfrac{1}{2} + \ii T + \ii h)|, \, h\in [0,1])$ when $T$ is large, if we assume the Riemann hypothesis.
    The second order of the maximum was shown in \cite{MR3619786}, but the tightness of the recentered maximum of $X_{\scriptscriptstyle T}$ is still open.
    This is the motivation behind this paper.
    Our result shows that the maximum of $X_{\scriptscriptstyle T}$ can be discretized to a number of points, $\log T$, that coincides with the number of leaves in the approximate branching structure underlying $\log|\zeta|$.
    This is a non-trivial improvement over the main result in \cite{MR3906393}. %, which was suboptimal.
    Our method yields similar discretizations for all the derivatives of $X_{\scriptscriptstyle T}$.

    \vspace{3mm}
    For various asymptotic results of interest on the extreme values of the model in \eqref{def:X}, see \cite{arXiv:1304.0677,MR3619786,arXiv:1706.08462,arXiv:1906.08573,MR3906393,MR3841407,Ouimet2019phd,arXiv:1604.08378,arXiv:1609.00027}.
    For asymptotic results on the maximum of the Riemann zeta function on the critical line, we refer the reader to \cite{MR3851835,ABBRS_2019,arXiv:1901.04061,arXiv:1906.05783,MR3662441,arXiv:1804.01629} and references therein.
    Related conjectures can be found in \cite{MR2350784,FyodorovHiaryKeating2012,MR3151088}.

\section{Result}

    Below, we work with the increments of the field $X_{\scriptscriptstyle T}$.
    For $-1 \leq r \leq k$, let
    \begin{equation}\label{def:G.r.k}
        X_{r,k}(h) = \sum_{2^r < \log p \leq 2^k} \hspace{-2mm} \frac{\Re(U_p \, p^{-\ii h})}{p^{1/2}}, \quad h\in [0,1].
    \end{equation}
    The $j$-th derivative of $X_{\scriptscriptstyle r,k}$ is
    \begin{equation*}
        X_{r,k}^{(j)}(h) \leqdef \frac{\rd^j}{\rd h^j} X_{r,k}(h) =
        \begin{cases}
            \sum_{2^r < \log p \leq 2^k} (-1)^{j/2} \, \frac{(\log p)^j}{p^{1/2}} \, \Re(U_p p^{-\ii h}), &\hspace{-1mm}\mbox{if } j ~\text{is even}, \\[2mm]
            \sum_{2^r < \log p \leq 2^k} (-1)^{(j-1)/2} \, \frac{(\log p)^j}{p^{1/2}} \, \Im(U_p p^{-\ii h}), &\hspace{-1mm}\mbox{if } j ~\text{is odd},
        \end{cases}
    \end{equation*}

    \vspace{0.5mm}
    \noindent
    which can be seen as a toy model for the real part of the $j$-th logarithmic derivative of the Riemann zeta function on the critical line.

    \vspace{4mm}
     The theorem below says that if we want to find the asymptotics of the maximum of $X_{\scriptscriptstyle r,k}^{\scriptscriptstyle (j)}$ up to a constant, we can restrict the maximum to a discrete set containing $\OO(2^{\scriptscriptstyle \frac{1}{2}(j+2)k})$ equidistant points.

    \vspace{1mm}
    \begin{theorem}[Discretization]\label{thm:discretization}
        Fix $j\in \N_0$ and let $\mathcal{H}_k \leqdef \alpha 2^{-\frac{1}{2}(j+2)k} \hspace{0.3mm} \N_0 \cap [0,1]$ for $\alpha > 0$.
        Then, for all $\e, K > 0$, there exists $\alpha = \alpha(j,\e,K) > 0$ small enough that
        \begin{equation}\label{eq:thm:discretization}
            \P\Big(\big|\max_{h\in [0,1]} X_{r,k}^{(j)}(h) - \max_{h\in \mathcal{H}_k} X_{r,k}^{(j)}(h)\big| > K\Big) < \e.
        \end{equation}
        Similarly, if $\alpha > 0$ is small enough with respect to $j$ and $\e$, there exists $K = K(j,\e,\alpha) > 0$ large enough that \eqref{eq:thm:discretization} holds.
    \end{theorem}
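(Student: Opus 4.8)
The plan is to reduce the discretization error to (a multiple of) the squared mesh times the size of a higher derivative of $X_{r,k}^{(j)}$ near its maximum, and then to control that derivative by combining the almost-sure critical‑point condition at the maximizer with the covariance structure of the field. Since $\mathcal H_k\subseteq[0,1]$, the quantity $D_{r,k}\leqdef\max_{h\in[0,1]}X_{r,k}^{(j)}(h)-\max_{h\in\mathcal H_k}X_{r,k}^{(j)}(h)$ is nonnegative, and both assertions of Theorem~\ref{thm:discretization} follow from a single uniform bound: for every $\e>0$ there are $\alpha_0(j,\e)>0$ and, for each $\alpha\le\alpha_0$, a constant $C(j,\alpha)$ with $C(j,\alpha)\to0$ as $\alpha\to0$ and $\P(D_{r,k}>C(j,\alpha))<\e$ for \emph{all} $-1\le r\le k$. (Indeed, the first statement comes from choosing $\alpha$ so small that $C(j,\alpha)\le K$, the second from taking $K=C(j,\alpha)$.) For $k$ below a threshold $k_0(j,\e)$ this is elementary, since $\mathcal H_k$ is then a fixed finite grid whose mesh is of the same order as the correlation length of $X_{r,k}^{(j)}$; so I assume $k\ge k_0$.

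Next I pass to the second derivative at the maximizer. Let $h^\ast$ be the (a.s.\ unique) maximizer of $X_{r,k}^{(j)}$ on $[0,1]$ and write $\delta_k\leqdef\alpha\,2^{-\frac12(j+2)k}$ for the mesh of $\mathcal H_k$. On $\{h^\ast=0\}$ one has $D_{r,k}=0$ because $0\in\mathcal H_k$; the event that $h^\ast$ lies within $\delta_k$ of the endpoint $1$ has probability $\OO(2^{-k})$ (the endpoint competes against $\asymp2^{k}$ essentially independent interior locations) and is absorbed into the error term. On the remaining event $h^\ast\in(0,1)$ is a critical point, so $X_{r,k}^{(j+1)}(h^\ast)=0$, and choosing $h_\ast\in\mathcal H_k$ with $|h^\ast-h_\ast|\le\delta_k$, a Taylor expansion with integral remainder gives
\begin{equation*}
0\ \le\ D_{r,k}\ \le\ X_{r,k}^{(j)}(h^\ast)-X_{r,k}^{(j)}(h_\ast)\ \le\ \frac{\delta_k^{2}}{2}\ \sup_{|u-h^\ast|\le\delta_k}\big|X_{r,k}^{(j+2)}(u)\big|.
\end{equation*}

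The heart of the argument is to show that this right‑hand side is $\OO(\alpha^2)$ off an event of probability $\e$, uniformly in $r,k$. Because $\delta_k$ is far below the correlation length $\asymp2^{-k}$ of $X_{r,k}^{(j+2)}$, the supremum over the $\delta_k$‑neighbourhood of $h^\ast$ differs from $|X_{r,k}^{(j+2)}(h^\ast)|$ by at most $\delta_k\sup_{[0,1]}|X_{r,k}^{(j+3)}|$, which a crude bound renders negligible. It remains to estimate $|X_{r,k}^{(j+2)}(h^\ast)|$ at the maximizer, and here one uses the actual law of $X_{r,k}^{(j)}$: by \eqref{def:G.r.k} it is a sum of independent, uniformly controlled summands whose covariances are built from the quantities $\sigma_i^2\leqdef\tfrac12\sum_{2^r<\log p\le2^k}(\log p)^{2i}/p$, and prime number theorem estimates give $\sigma_i^2\asymp(2^k)^{2i}$ for $i\ge1$ (and $\sigma_0^2\asymp k-r$), so that $\delta_k^2\,\sigma_{j+2}\asymp\alpha^2$. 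One then shows, using the conditional law of $X_{r,k}^{(j+2)}(h^\ast)$ given the maximal value $\max_{[0,1]}X_{r,k}^{(j)}$ and given $X_{r,k}^{(j+1)}(h^\ast)=0$, that $X_{r,k}^{(j+2)}(h^\ast)$ is concentrated on the scale dictated by these covariances, whence $\delta_k^{2}\,|X_{r,k}^{(j+2)}(h^\ast)|\le C(j)\alpha^2$ with probability $\ge1-\e$. Together with the previous display this bounds $D_{r,k}$, which gives \eqref{eq:thm:discretization}.

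The step I expect to be the main obstacle is exactly this last estimate. Since the maximizer $h^\ast$ is random, one cannot simply condition the field to be large at a fixed point; the control of $X_{r,k}^{(j+2)}$ near $h^\ast$ must therefore be obtained uniformly over all candidate maximizers — e.g.\ by a union bound over a $\delta_k$‑net of $[0,1]$ after discretizing the possible maximal value — and one must be careful not to introduce the spurious $\sqrt{\log}$ factor that would come from naively replacing $|X_{r,k}^{(j+2)}(h^\ast)|$ by $\sup_{[0,1]}|X_{r,k}^{(j+2)}|$. A secondary, more routine task is to carry the Gaussian‑style computations over to the non‑Gaussian field $X_{r,k}^{(j)}$ (via a comparison estimate, or by running the concentration arguments directly for sums of bounded independent variables), and to handle uniformly the boundary of $[0,1]$ and the finitely many small values of $k$.
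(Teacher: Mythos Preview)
Your outline starts in the right place --- exploit $X_{r,k}^{(j+1)}(h^\star)=0$ and bound by a higher derivative --- but it is not a proof: the step you yourself flag as ``the main obstacle'' (controlling $|X_{r,k}^{(j+2)}(h^\star)|$ at the random maximizer, via conditional laws or a union bound over a $\delta_k$-net while avoiding a spurious $\sqrt{\log}$) is left open, and the surrounding case analysis (small $k$, endpoints, density of $h^\star$, passage from Gaussian to non-Gaussian) is extra work you have created for yourself. The paper's argument sidesteps all of this.

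The key difference is that the paper does \emph{not} try to bound $|X_{r,k}^{(j+2)}|$ near $h^\star$. Instead it bounds the local maximum of the \emph{first} derivative: with $x^\star$ the argmax of $X_{r,k}^{(j+1)}$ on $\{x:|x-h^\star|\le\delta_k\}$, one writes
\[
X_{r,k}^{(j+1)}(x^\star)=\int_{h^\star}^{x^\star}X_{r,k}^{(j+2)}(x)\,\rd x
\]
(from $X_{r,k}^{(j+1)}(h^\star)=0$), squares, applies Jensen, and takes expectations. Since $\E[|X_{r,k}^{(j+2)}(x)|^2]$ is a constant $V$ (independent of $x$ because $\E[U_p^2]=\E[\overline{U_p}^2]=0$), this yields directly $\E[|X_{r,k}^{(j+1)}(x^\star)|^2]\le\delta_k^{2}V$, and Chebyshev gives $X_{r,k}^{(j+1)}(x^\star)\le A\alpha\,2^{\frac12(j+2)k}$ off a set of probability $<\e$. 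A \emph{first}-order mean value theorem applied to $e^{X_{r,k}^{(j)}}$ between $h^\star$ and the nearest grid point $h\in\mathcal H_k$ then gives
\[
0\le e^{X_{r,k}^{(j)}(h^\star)}-e^{X_{r,k}^{(j)}(h)}\le \delta_k\,X_{r,k}^{(j+1)}(x^\star)\,e^{X_{r,k}^{(j)}(h^\star)}\le A\alpha^2\,e^{X_{r,k}^{(j)}(h^\star)},
\]
and the theorem follows by choosing $\alpha$ so that $A\alpha^2\le1-e^{-K}$.

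So the ``main obstacle'' in your plan dissolves: no conditional distributions, no nets, no endpoint or small-$k$ cases, and no Gaussian comparison are needed --- just one second-moment bound plus Chebyshev. Your second-order Taylor route forces you to control a pointwise value of $X_{r,k}^{(j+2)}$ at a random location; the paper's route replaces that by an $L^2$-bound on an \emph{integral} of $X_{r,k}^{(j+2)}$, which is what makes the argument short.
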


    \vspace{1mm}
    \begin{remark}
        When $r = -1$ and $2^k = \log T$, $X_{\scriptscriptstyle r,k}$ is the full model $X_{\scriptscriptstyle T}$, and $|\mathcal{H}_k| = \OO(\log T)$.
        Theorem \ref{thm:discretization} improves the main result in \cite{MR3906393}, where only the case $j = 0$ was treated and the discrete set had $\OO(\sqrt{\log \log T} \log T)$ points instead.
        The proof rested on estimates of the joint Laplace transform of $X_{\scriptscriptstyle r,k}^{\scriptscriptstyle (1)}$ and continuity estimates derived from a chaining argument of \cite{MR3619786}.
        Our proof here is much simpler, much shorter, and also applies for the derivatives of all orders.
        The main idea is to control the maximum of $X_{\scriptscriptstyle r,k}^{\scriptscriptstyle (j+1)}$ around the point $h^{\star}$ where $X_{\scriptscriptstyle r,k}^{\scriptscriptstyle (j)}$ maximizes by applying a mean value theorem to $X_{\scriptscriptstyle r,k}^{\scriptscriptstyle (j+1)}$ followed by Jensen's inequality and a bound on the variance of $X_{\scriptscriptstyle r,k}^{\scriptscriptstyle (j+2)}$ using prime number theorem estimates.
    \end{remark}

\section{Proof of Theorem \ref{thm:discretization}}

        Fix $j\in \N_0$ and let
        \begin{equation}\label{eq:h.star.x.star}
            \begin{aligned}
                &h^{\star} \leqdef \text{argmax}_{h\in [0,1]} X_{r,k}^{(j)}(h), \\
                &x^{\star} \leqdef \text{argmax}_{x\in [0,1] \, : \, |x - h^{\star}| \leq \alpha 2^{-\frac{1}{2}(j+2)k}} X_{r,k}^{(j+1)}(x).
            \end{aligned}
        \end{equation}
        (Note that $h^{\star}, x^{\star}\in (0,1)$ with probability $1$.)
        By the mean value theorem, the fact that $X_{\scriptscriptstyle r,k}^{\scriptscriptstyle (j+1)}(h^{\star}) = 0$, and by Jensen's inequality, we have
        \begin{align}
            \big|X_{r,k}^{(j+1)}(x^{\star})\big|^2
            = \Big|\int_{h^{\star}}^{x^{\star}} \hspace{-1.5mm} X_{r,k}^{(j+2)}(x) \rd x\Big|^2
            \leq |x^{\star} - h^{\star}| \int_{h^{\star}}^{x^{\star}} \big|X_{r,k}^{(j+2)}(x)\big|^2 \rd x,
        \end{align}
        which implies, using $(\Re \, z)^2 = (z + \overline{z})^2/4$, $(\Im \, z)^2 = (z - \overline{z})^2/(-4)$, $\E[U_p^2] = \E[\overline{U_p}^2] = 0$ and the independence of the $U_p$'s,
        \begin{align}\label{eq:variance.bound}
            \E\Big[\big|X_{r,k}^{(j+1)}(x^{\star})\big|^2\Big]
            &\leq \alpha^2 2^{-(j+2)k} \max_x \E\Big[\big|X_{r,k}^{(j+2)}(x)\big|^2\Big] \notag \\[2mm]
            &= \alpha^2 2^{-(j+2)k} \hspace{1mm}\max_x\hspace{-1mm} \sum_{2^r < \log p \leq 2^k} \hspace{-2mm} \frac{(\log p)^{2(j+2)}}{p} \left\{\frac{0 \cdot p^{-2\ii x} \pm 2 \, \E[U_p \overline{U_p}] + 0 \cdot p^{2\ii x}}{\pm 4}\right\} \notag \\[1mm]
            &= \alpha^2 2^{-(j+2)k} \sum_{2^r < \log p \leq 2^k} \hspace{-2mm} \frac{(\log p)^{2(j+2)}}{2p}.
        \end{align}
        (Here, $\pm$ means $+$ when $j$ is even and $-$ when $j$ is odd.)
        By the prime number theorem estimates in Lemma \ref{lem:PNT.estimates}, the last sum of primes is bounded above by $(2^{\scriptscriptstyle 2(j+2)k} - 2^{\scriptscriptstyle 2(j+2)r}) / (4(j+2)) + D_j$, where $D_j > 0$ is a constant that only depends on $j$.
        Hence, by Chebyshev's inequality, we can take $A = A(j,\e) > 0$ large enough that
        \begin{equation}\label{eq:prop:discretization.end.2.1}
            \P\Big(X_{r,k}^{(j+1)}(x^{\star}) > A \alpha 2^{\frac{1}{2}(j+2)k}\Big) < \e.
        \end{equation}

        Now, on the complementary event $\{0 \leq X_{\scriptscriptstyle r,k}^{\scriptscriptstyle (j+1)}(x^{\star}) \leq A \alpha 2^{\scriptscriptstyle \frac{1}{2}(j+2)k}\}$, apply the mean value theorem for $\exp(X_{\scriptscriptstyle r,k}^{\scriptscriptstyle (j)}(h^{\star}))$ around the point $h\in \mathcal{H}_k$ that is closest on the left-hand side of $h^{\star}$ (this choice implies $0 \leq h^{\star} - h \leq \alpha 2^{\scriptscriptstyle -\frac{1}{2}(j+2)k}$), then
        \begin{align}
            0 \leq e^{X_{r,k}^{(j)}(h^{\star})} - e^{X_{r,k}^{(j)}(h)}
            &= \int_h^{h^{\star}} X_{r,k}^{(j+1)}(x) \, e^{X_{r,k}^{(j)}(x)} \rd x \notag \\[0.5mm]
            &\leq \alpha 2^{\scriptscriptstyle -\frac{1}{2}(j+2)k} \cdot X_{r,k}^{(j+1)}(x^{\star}) \, e^{X_{r,k}^{(j)}(h^{\star})} \notag \\[0.5mm]
            &\leq A \alpha^2 \cdot e^{X_{r,k}^{(j)}(h^{\star})}.
        \end{align}
        For any given $K > 0$, choose $\alpha = \alpha(j,\e,K) > 0$ small enough that $A \alpha^2 \leq 1 - e^{\scriptscriptstyle -K}$.
        We find that
        \begin{equation}
            e^{X_{r,k}^{(j)}(h)} \geq e^{X_{r,k}^{(j)}(h^{\star})} \, e^{-K} ~\text{on the event $\{0 \leq X_{r,k}^{\scriptscriptstyle (j+1)}(x^{\star}) \leq A \alpha 2^{\frac{1}{2}(j+2)k}\}$},
        \end{equation}
        and thus
        \begin{equation}\label{eq:prop:discretization.end.2.2}
            \P\Big(X_{r,k}^{(j)}(h^{\star}) \geq \max_{h\in \mathcal{H}_k} X_{r,k}^{(j)}(h) \geq X_{r,k}^{(j)}(h^{\star}) - K\Big) \geq 1 - \e,
        \end{equation}
        by \eqref{eq:prop:discretization.end.2.1}.
        This ends the proof.

    \vspace{1mm}
    \begin{remark}
        It is straightforward to verify that the proof of Theorem \ref{thm:discretization} carries over to the Gaussian model
        \vspace{-2mm}
        \begin{equation}
            G_{r,k}(h) = \sum_{2^r < \log p \leq 2^k} \hspace{-2mm} \frac{\Re(Z_p \, p^{-\ii h})}{p^{1/2}}, \quad h\in [0,1],
        \end{equation}
        from \cite{arXiv:1604.08378,arXiv:1609.00027}, where $Z_p = W_p^{\scriptscriptstyle (1)} + \ii W_p^{\scriptscriptstyle (2)}$ \hspace{-1.3mm} and \hspace{-0.3mm} $W_p^{\scriptscriptstyle (1)}\hspace{-1mm},\hspace{0.5mm} W_p^{\scriptscriptstyle (2)} \sim \mathcal{N}(0,1/2)$ are i.i.d. This is because the variance bound in \eqref{eq:variance.bound} is also valid if we replace $X$ by $G$.
    \end{remark}

\section{Tool}

    The estimates below are simple consequences of the prime number theorem and integration by parts.

    \begin{lemma}[Lemma A.1 in \cite{MR3906393}]\label{lem:PNT.estimates}
        Let $j\in \N^*$ and $1 \leq P < Q$, then
        \begin{equation}
            \bigg|\sum_{P < p \leq Q} \hspace{-1mm}\frac{(\log p)^j}{p} - \left(\frac{(\log Q)^j}{j} - \frac{(\log P)^j}{j}\right)\bigg| \leq D_j,
        \end{equation}
        for some constant $D_j > 0$ that only depends on $j$.
    \end{lemma}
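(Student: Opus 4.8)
The plan is to reduce the two--parameter estimate to the boundedness of a single function. For $x \ge 1$ set
\begin{equation*}
    \Phi_j(x) \leqdef \sum_{p \le x} \frac{(\log p)^j}{p} - \frac{(\log x)^j}{j}.
\end{equation*}
Since $\sum_{P < p \le Q} = \sum_{p \le Q} - \sum_{p \le P}$, the quantity inside the absolute value in the lemma is exactly $\Phi_j(Q) - \Phi_j(P)$, so it suffices to produce a constant $D_j' = D_j'(j) > 0$ with $\sup_{x \ge 1} |\Phi_j(x)| \le D_j'$; then $D_j \leqdef 2 D_j'$ works. On any fixed interval $[1, x_0]$ this is immediate, because the sum over primes is finite and $(\log x)^j / j$ is continuous, so the entire task is to bound $\Phi_j(x)$ as $x \to \infty$.

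For that I would use partial summation against Chebyshev's function $\theta(t) \leqdef \sum_{p \le t} \log p$. Writing $\frac{(\log p)^j}{p} = \frac{(\log p)^{j-1}}{p}\log p$ and applying Abel summation gives, for $x \ge 2$,
\begin{equation*}
    \sum_{p \le x} \frac{(\log p)^j}{p} = \frac{(\log x)^{j-1}}{x}\, \theta(x) - \int_2^x \theta(t)\, \frac{\rd}{\rd t}\!\left(\frac{(\log t)^{j-1}}{t}\right) \rd t .
\end{equation*}
Now I would insert the prime number theorem with its classical error term, $\theta(t) = t + R(t)$ with $R(t) = \OO\big(t \exp(-c\sqrt{\log t})\big)$ for some $c>0$. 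The contribution of the main term $t$ is an elementary antiderivative computation: it produces exactly $\frac{(\log x)^j}{j}$ together with a $j$-dependent constant, so that after subtracting $\frac{(\log x)^j}{j}$ one is left with that constant plus the two remainder contributions, a boundary term $\frac{(\log x)^{j-1}}{x} R(x)$ and an integral $\int_2^x R(t)\, \frac{\rd}{\rd t}\big(\frac{(\log t)^{j-1}}{t}\big)\, \rd t$.

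The remaining work is to bound these two remainder terms uniformly in $x$, and this is the one step that uses genuine analytic number theory rather than bookkeeping. The boundary term is $\OO\big((\log x)^{j-1} \exp(-c\sqrt{\log x})\big)$, hence bounded (it even tends to $0$). For the integral, $\frac{\rd}{\rd t}\big(\frac{(\log t)^{j-1}}{t}\big) = \OO\big(\frac{(\log t)^{j-1}}{t^2}\big)$, so the integrand is $\OO\big(\frac{(\log t)^{j-1}}{t}\exp(-c\sqrt{\log t})\big)$, and the substitution $u = \log t$ turns its integral over $[2,\infty)$ into $\OO\big(\int_0^\infty u^{j-1}\exp(-c\sqrt u)\, \rd u\big)$, which converges to a constant depending only on $j$. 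Hence $\Phi_j(x) = \OO_j(1)$ as $x \to \infty$, and together with the bounded--interval case this yields $\sup_{x \ge 1}|\Phi_j(x)| < \infty$, completing the proof. The point to watch is precisely this last estimate: one needs the remainder in the prime number theorem to decay faster than every power of $\log t$, which is why the classical error term is invoked and not merely $\theta(t) \sim t$; for $j = 1$ the assertion is already Mertens' first theorem.
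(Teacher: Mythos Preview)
Your proof is correct and matches the paper's approach: the paper does not give an explicit argument for this lemma but only states that it is a ``simple consequence of the prime number theorem and integration by parts'' (citing it from \cite{MR3906393}), which is exactly what you do via Abel summation against $\theta(t)$ and the insertion $\theta(t) = t + R(t)$. One minor remark: your closing sentence slightly overstates what is needed --- for each fixed $j$ it would suffice that $R(t) = \OO(t/(\log t)^{j+1+\delta})$ for some $\delta > 0$, not decay faster than every power of $\log t$ --- but the classical error term you invoke certainly suffices, so the argument stands as written.
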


%
% ----------  B I B L I O G R A P H Y  ----------
%

\bibliographystyle{authordate1}
\bibliography{Ouimet_PhD_bib}

\begin{thebibliography}{}

\bibitem[\protect\citename{Arguin \& Ouimet, }2019]{MR3906393}
Arguin, L.-P., \& Ouimet, F. 2019.
\newblock Large deviations and continuity estimates for the derivative of a
  random model of {$\log|\zeta|$} on the critical line.
\newblock {\em J. Math. Anal. Appl.}, {\bf 472}(1), 687--695.
\newblock \href{http://www.ams.org/mathscinet-getitem?mr=MR3906393}{MR3906393}.

\bibitem[\protect\citename{Arguin \& Tai, }2019]{arXiv:1706.08462}
Arguin, L.-P., \& Tai, W. 2019.
\newblock Is the {R}iemann zeta function in a short interval a 1-{R}{S}{B} spin
  glass ?
\newblock {\em Pages  63--88 of:} {\em Sojourns in Probability Theory and
  Statistical Physics - I}.
\newblock Springer Proceedings in Mathematics \& Statistics.
\newblock Springer Singapore.
\newblock
  \href{https://doi.org/10.1007/978-981-15-0294-1}{doi:10.1007/978-981-15-0294-1}.

\bibitem[\protect\citename{Arguin {\em et~al.}, }2017]{MR3619786}
Arguin, L.-P., Belius, D., \& Harper, A.~J. 2017.
\newblock Maxima of a randomized {R}iemann zeta function, and branching random
  walks.
\newblock {\em Ann. Appl. Probab.}, {\bf 27}(1), 178--215.
\newblock \href{http://www.ams.org/mathscinet-getitem?mr=MR3619786}{MR3619786}.

\bibitem[\protect\citename{Arguin {\em et~al.}, }2019a]{arXiv:1906.08573}
Arguin, L.-P., Hartung, L., \& Kistler, N. 2019a.
\newblock High points of a random model of the {R}iemann-zeta function and
  {G}aussian multiplicative chaos.
\newblock {\em Preprint},  1--13.
\newblock \href{http://arxiv.org/abs/1906.08573}{arXiv:1906.08573}.

\bibitem[\protect\citename{Arguin {\em et~al.}, }2019b]{ABBRS_2019}
Arguin, L.-P., Belius, D., Bourgade, P., Radziwi{\l\l}, M., \& Soundararajan,
  K. 2019b.
\newblock Maximum of the {R}iemann zeta function on a short interval of the
  critical line.
\newblock {\em Comm. Pure Appl. Math.}, {\bf 72}(3), 500--535.
\newblock \href{http://www.ams.org/mathscinet-getitem?mr=MR3911893}{MR3911893}.

\bibitem[\protect\citename{Arguin {\em et~al.}, }2019c]{arXiv:1901.04061}
Arguin, L.-P., Ouimet, F., \& Radziwi\l\l, M. 2019c.
\newblock Moments of the {R}iemann zeta function on short intervals of the
  critical line.
\newblock {\em Preprint},  1--34.
\newblock \href{https://arxiv.org/abs/1901.04061}{arXiv:1901.04061}.

\bibitem[\protect\citename{Bondarenko \& Seip, }2017]{MR3662441}
Bondarenko, A., \& Seip, K. 2017.
\newblock Large greatest common divisor sums and extreme values of the
  {R}iemann zeta function.
\newblock {\em Duke Math. J.}, {\bf 166}(9), 1685--1701.
\newblock \href{http://www.ams.org/mathscinet-getitem?mr=MR3662441}{MR3662441}.

\bibitem[\protect\citename{de~la Bret{\`e}che \& Tenenbaum,
  }2019]{arXiv:1804.01629}
de~la Bret{\`e}che, R., \& Tenenbaum, G. 2019.
\newblock Sommes de {G}{\'a}l et applications.
\newblock {\em Proc. London Math. Soc.}, {\bf 119}(3), 104--134.
\newblock \href{https://doi.org/10.1112/plms.12224}{doi:10.1112/plms.12224}.

\bibitem[\protect\citename{Farmer {\em et~al.}, }2007]{MR2350784}
Farmer, D.~W., Gonek, S.~M., \& Hughes, C.~P. 2007.
\newblock The maximum size of {$L$}-functions.
\newblock {\em J. Reine Angew. Math.}, {\bf 609}, 215--236.
\newblock \href{http://www.ams.org/mathscinet-getitem?mr=MR2350784}{MR2350784}.

\bibitem[\protect\citename{Fyodorov \& Keating, }2014]{MR3151088}
Fyodorov, Y.~V., \& Keating, J.~P. 2014.
\newblock Freezing transitions and extreme values: random matrix theory,
  $\zeta(\frac{1}{2} + i t)$ and disordered landscapes.
\newblock {\em Philos. Trans. R. Soc. A}, {\bf 372}(20120503), 1--32.
\newblock \href{http://www.ams.org/mathscinet-getitem?mr=MR3151088}{MR3151088}.

\bibitem[\protect\citename{Fyodorov {\em et~al.},
  }2012]{FyodorovHiaryKeating2012}
Fyodorov, Y.V., Hiary, G.A., \& Keating, J.P. 2012.
\newblock Freezing transition, characteristic polynomials of random matrices,
  and the {R}iemann zeta-function.
\newblock {\em Phys. Rev. Lett.}, {\bf 108}(170601), 1--4.
\newblock
  \href{https://doi.org/10.1103/PhysRevLett.108.170601}{doi:10.1103/PhysRevLett.108.170601}.

\bibitem[\protect\citename{Harper, }2013]{arXiv:1304.0677}
Harper, A.~J. 2013.
\newblock A note on the maximum of the {R}iemann zeta function, and
  log-correlated random variables.
\newblock {\em Preprint},  1--26.
\newblock \href{http://arxiv.org/abs/1304.0677}{arXiv:1304.0677}.

\bibitem[\protect\citename{Harper, }2019]{arXiv:1906.05783}
Harper, A.~J. 2019.
\newblock On the partition function of the {R}iemann zeta function, and the
  {F}yodorov--{H}iary--{K}eating conjecture.
\newblock {\em Preprint},  1--40.
\newblock \href{http://arxiv.org/abs/1906.05783}{arXiv:1906.05783}.

\bibitem[\protect\citename{Najnudel, }2018]{MR3851835}
Najnudel, J. 2018.
\newblock On the extreme values of the {R}iemann zeta function on random
  intervals of the critical line.
\newblock {\em Probab. Theory Related Fields}, {\bf 172}(1-2), 387--452.
\newblock \href{http://www.ams.org/mathscinet-getitem?mr=MR3851835}{MR3851835}.

\bibitem[\protect\citename{Ouimet, }2018]{MR3841407}
Ouimet, F. 2018.
\newblock Poisson-{D}irichlet statistics for the extremes of a randomized
  {R}iemann zeta function.
\newblock {\em Electron. Commun. Probab.}, {\bf 23}, Paper No. 46, 15.
\newblock \href{http://www.ams.org/mathscinet-getitem?mr=MR3841407}{MR3841407}.

\bibitem[\protect\citename{Ouimet, }2019]{Ouimet2019phd}
Ouimet, F. 2019.
\newblock {\em Extremes of log-correlated random fields and the {R}iemann-zeta
  function, and some asymptotic results for various estimators in statistics}.
\newblock Ph{D} thesis, Universit\'e de Montr\'eal.

\bibitem[\protect\citename{Saksman \& Webb, }2016]{arXiv:1604.08378}
Saksman, E., \& Webb, C. 2016.
\newblock Multiplicative chaos measures for a random model of the {R}iemann
  zeta function.
\newblock {\em Preprint},  1--36.
\newblock \href{http://arxiv.org/abs/1604.08378}{arXiv:1604.08378}.

\bibitem[\protect\citename{Saksman \& Webb, }2018]{arXiv:1609.00027}
Saksman, E., \& Webb, C. 2018.
\newblock The {R}iemann zeta function and {G}aussian multiplicative chaos:
  statistics on the critical line.
\newblock {\em Preprint},  1--85.
\newblock \href{http://arxiv.org/abs/1609.00027}{arXiv:1609.00027}.

\end{thebibliography}

\end{document}